 \newtheorem{thm}{Theorem}[section]
\newdefinition{rem}{Remark}[section]
 \newdefinition{example}{Example}
\newproof{proof}{Proof}
\journal{Chaos, Solitons $\&$ Fractals  }
\begin{document}
\begin{frontmatter}
\title{$q$-Fibonacci bicomplex quaternions}

\author[rvt]{F.~Torunbalc{\i} Ayd{\i}n\corref{cor1}}
\ead{ftorunay@gmail.com}

\cortext[cor1]{Corresponding author}

 \address[rvt]{Yildiz Technical University,
Faculty of Chemical and Metallurgical Engineering,
Department of Mathematical Engineering,
Davutpasa Campus, 34220,
Esenler, \.{I}stanbul,  TURKEY}

  %

\begin{abstract}
In the paper, we define the $q$-Fibonacci bicomplex quaternions and the $q$-Lucas bicomplex quaternions, respectively. Then, we give some algebraic properties of $q$-Fibonacci bicomplex quaternions and the $q$-Lucas bicomplex quaternions.
\end{abstract}

\begin{keyword}
Bicomplex number; $q$-integer; Fibonacci number; Bicomplex Fibonacci quaternion; $q$-quaternion; $q$-Fibonacci quaternion.
\end{keyword}


\end{frontmatter}

\section{Introduction}
The real quaternions were first described by Irish mathematician William Rowan Hamilton in 1843. The real quaternions constitute an extension of complex numbers into a four-dimensional space and can be considered as four-dimensional vectors, in the same way that complex numbers are considered as two-dimensional vectors. Hamilton \cite{hamilton1866elements} introduced the set of quaternions which can be represented as     
\begin{equation}\label{1}
H=\left\{ \,q={{q}_{0}}+i\,{{q}_{1}}+j\,{{q}_{2}}+k\,{{q}_{3}}\,\left. {} \right|\ \ {{q}_{0}},\,{{q}_{1}},\,{{q}_{2}},\,{q}_{3}\in \mathbb R\, \right\}
\end{equation}
where
\begin{equation}\label{2}
{{i}^{2}}={{j}^{2}}={{k}^{2}}=-1\,,\ \ i\ j=-j\ i=k\,,\quad j\ k=-k \ j=i\,,\quad k\ i=-i\ k=j\,.
\end{equation}
Horadam \cite{horadam1963complex,horadam1993quaternion} defined complex Fibonacci and Lucas quaternions as follows

\begin{equation}\label{3}
Q_n=F_n+F_{n+1}\,i+F_{n+2}\,j+F_{n+3}\,k 
\end{equation}
and
\begin{equation}\label{4}
K_n=L_n+L_{n+1}\,i+L_{n+2}\,j+L_{n+3}\,k
\end{equation}
where $F_n$ and $L_n$ denote the $n-th$ Fibonacci and Lucas numbers, respectively. Also, the imaginary quaternion units $i\,j,\,\,k$ have the following rules
\begin{equation*}
i^2=j^2=k^2=-1\,,\,\, i\,j=-j\,i=k\,,\quad j\,k=-k \,j=i\,,\quad k\,i=-i\,k=j
\end{equation*}
There are several studies on quaternions as the Fibonacci and Lucas quaternions and their generalizations, for example,\cite{iyer1969some, iyer1969note, koshy2019fibonacci, vajda1989fibonacci, swamy1973generalized, halici2012fibonacci, halici2013complex, nurkan2015dual,clifford1882preliminary,yuce2016new, akkus2019quaternions}.\\
Bicomplex numbers were introduced by Corrado Segre in 1892 \cite{fulton2006corrado}. In 1991, G. Baley Price, the bicomplex numbers are given in his book based on multicomplex spaces and functions \cite{price1991introduction}. In recent years, fractal structures of these numbers have also been studied \cite{rochon2004algebraic, nurkan2015note}. The set of bicomplex numbers can be expressed by the basis $\{1\,,i\,,j\,,i\,j\,\}$ (Table 1) as, 
\begin{equation}\label{5}
\begin{aligned}
\mathbb{C}_2=\{\, q=q_1+i\,q_2+j\,q_3+i\,j\,q_4 \ | \, q_1,q_2,q_3,q_4\in \mathbb R\}
\end{aligned}
\end{equation}
where $i$,$j$ and $i\,j$ satisfy the conditions 
\begin{equation}\label{6}
i^2=-1,\,\,\,j^2=-1,\,\,\,i\,j=j\,i.  
\end{equation}\\
In 2018, the bicomplex Fibonacci quaternions defined by Ayd{\i}n Torunbalc{\i} \cite{aydin2018bicomplex} as follows
\begin{equation}\label{7}
\begin{aligned}
\mathbb{BF}_{n} & = {F}_{n}+i\,{F}_{n+1}+j\,{F}_{n+2}+i\,j\,{F}_{n+3} \\
& = ({F}_{n}+i\,{F}_{n+1})+({F}_{n+2}+i\,{F}_{n+3})\,j 
\end{aligned}
\end{equation}
where $i$, $j$ and $i\,j$ satisfy the conditions (\ref{6}).\\ 
The theory of the quantum $q$-calculus has been extensively studied in many branches of mathematics as well as in other areas in biology, physics, electrochemistry, economics, probability theory, and statistics \cite{arfken1999mathematical,adler1995quaternionic}. For  $n\in\mathbb{N}_{0}$ , the $q$-integer \, ${\lbrack{n}\rbrack}_{q}$ is defined as follows
\begin{equation}\label{8}
\begin{aligned}
{\lbrack{n}\rbrack}_{q} &= \frac{1-q^n}{1-q} = 1 + q + q^2 +\,\ldots\,+ q^{n-1}. 
\end{aligned}
\end{equation} 
By (\ref{8}), for all $m,n\in{\mathbb{Z}}$, can be easily obtained\,  $\lbrack{m + n}\rbrack_{q} = \lbrack{m}\rbrack_{q} + q^m \,\lbrack{n}\rbrack_{q}$. For more details related to the quantum $q$-calculus, we refer to \cite{andrews1999special,kac2002quantum}.\\
In 2019, $q$-Fibonacci hybrid and $q$-Lucas hybrid numbers defined by K{\i}z{\i}late\c{s} \cite{kizilatecs2020new} as follows
\begin{equation}\label{9}
\begin{aligned}
\mathbb{HF}_n(\alpha;q) =& \alpha^{n-1}\lbrack{n}\rbrack_{q}+\alpha^{n1}\lbrack{n+1}\rbrack_{q}\,\bold{i}+\alpha^{n+1}\lbrack{n+2}\rbrack_{q}\,\,\boldsymbol{\varepsilon}\\
&+\alpha^{n+2}\lbrack{n+3}\rbrack_{q}\,\bold{h}\, \\
\end{aligned}
\end{equation}
 and
\begin{equation}\label{10}
\begin{aligned}
\mathbb{HL}_n(\alpha;q) =& \alpha^{n}\frac{{\lbrack{2\,n}\rbrack}_{q}}{{\lbrack{n}\rbrack}_{q}}+\alpha^{n+1}\,\frac{{\lbrack{2\,n+2}\rbrack}_{q}}{{\lbrack{n+1}\rbrack}_{q}}\,\bold{i}+\alpha^{n+2}\,\frac{{\lbrack{2\,n+4}\rbrack}_{q}}{{\lbrack{n+2}\rbrack}_{q}}\,\,\boldsymbol{\varepsilon}\\
&+\alpha^{n+3}\frac{{\lbrack{2\,n+6}\rbrack}_{q}}{{\lbrack{n+3}\rbrack}_{q}}\,\bold{h} \\
\end{aligned}
\end{equation}
where $i$,\,\,$\varepsilon$ and $h$ satisfy the conditions 
\begin{equation}\label{11}
\bold{i^2}=-1,\,\,\,\boldsymbol{\varepsilon^2}=0,\,\,\,\bold{h^2}=1,\,\,\bold{i}\,\bold{h}=\bold{h}\,\bold{i}=\boldsymbol{\varepsilon}+\bold{i}.  
\end{equation} \,
Also, K{\i}z{\i}late\c{s} derived several interesting properties of these numbers such as Binet-Like formulas, exponential generating functions, summation formulas, Cassini-like identities, Catalan-like identities and d’Ocagne-like identities \cite{kizilatecs2020new}.\\
 
\section{$q$-Fibonacci bicomplex quaternions}

In this section, we define $q$-Fibonacci bicomplex quaternions and $q$-Lucas bicomplex quaternions by using the basis $\{1,\,i\,,j\,,i\,j\}$, where $i$,\,$j$ \,and\, $i\,j$ satisfy the conditions (\ref{6}) as follows
\begin{equation}\label{12}
\begin{array}{rl}
\mathbb{BF}_n(\alpha;q) =&\alpha^{n-1}\lbrack{n}\rbrack_{q}+\alpha^{n}\lbrack{n+1}\rbrack_{q}\,\,i+\alpha^{n+1}\lbrack{n+2}\rbrack_{q}\,\,j+\alpha^{n+2}\lbrack{n+3}\rbrack_{q}\,i\,j \\
\\
=&\alpha^{n}\,(\frac{1-q^n}{\alpha-\alpha\,q})+\alpha^{n+1}\,(\frac{1-q^{n+1}}{\alpha-\alpha\,q})\,i \\
\\
&+\alpha^{n+2}\,(\frac{1-q^{n+2}}{\alpha-\alpha\,q})\,j+\alpha^{n+3}\,(\frac{1-q^{n+3}}{\alpha-\alpha\,q})\,\,i\,j\\
\\
=&\frac{\alpha^{n}}{\alpha -(\alpha\,q)}\,[\,1+\alpha\,i+\alpha^2\,j+\alpha^3\,i\,j\,] \\
\\
&-\frac{(\alpha\,q)^{n}}{\alpha -(\alpha\,q)}\,\,[\,1+(\alpha\,q)\,i+(\alpha\,q)^2\,j+(\alpha\,q)^3\,i\,j\,]  
\end{array}
\end{equation}

and
\begin{equation}\label{13}
\begin{array}{rl}
\mathbb{BL}_n(\alpha;q) =&\alpha^{n}\frac{{\lbrack{2\,n}\rbrack}_{q}}{{\lbrack{n}\rbrack}_{q}}+\alpha^{n+1}\,\frac{{\lbrack{2\,n+2}\rbrack}_{q}}{{\lbrack{n+1}\rbrack}_{q}}\,i+\alpha^{n+2}\,\frac{{\lbrack{2\,n+4}\rbrack}_{q}}{{\lbrack{n+2}\rbrack}_{q}}\,j
+\alpha^{n+3}\frac{{\lbrack{2\,n+6}\rbrack}_{q}}{{\lbrack{n+3}\rbrack}_{q}}\,i\,j\\
\\
=&\alpha^{2n}\,(\frac{1-q^{2n}}{\alpha^n-(\alpha\,q)^n})+\alpha^{2n+2}\,(\frac{1-q^{2n+2}}{\alpha^{n+1}-(\alpha\,q)^{n+1}})\,i \\
\\
&+\alpha^{2n+4}\,(\frac{1-q^{2n+4}}{\alpha^{n+2}-(\alpha\,q)^{n+2}})\,j+\alpha^{2n+6}\,(\frac{1-q^{2n+6}}{\alpha^{n+3}-(\alpha\,q)^{n+3}})\,i\,j \\
\\
=&{\alpha^{n}}\,(1+\alpha\,i+\alpha^2\,j+\alpha^3\,i\,j\,)\\
\\
&-(\alpha\,q)^{n}\,(1+(\alpha\,q)\,i+(\alpha\,q)^2\,j+(\alpha\,q)^3\,i\,j\,) 
\end{array}
\end{equation}
For $\alpha=\frac{1+\sqrt{5}}{2}$ and \,$(\alpha\,q)=\frac{-1}{\alpha}$,\,\,\,{q}-Fibonacci bicomplex quaternion \,$\mathbb{BF}_n(\alpha;q)$\,\,become the bicomplex Fibonacci quaternions $\mathbb{BF}_n$.\\
\\
The addition, substraction and multiplication by real scalars of two $q$-Fibonacci bicomplex quaternions gives $q$-Fibonacci bicomplex quaternion.\\  
Then, the addition, subtraction and multiplication by scalar of $q$-Fibonacci bicomplex quaternions are defined by 
\begin{equation}\label{14}
\begin{array}{rl}
\mathbb{BF}_n(\alpha;q)\pm\mathbb{BF}_m(\alpha;q)=&(\alpha^{n-1}\lbrack{n}\rbrack_{q}+\alpha^{n}\lbrack{n+1}\rbrack_{q}\,i+\alpha^{n+1}\lbrack{n+2}\rbrack_{q}\,j\\
&+\alpha^{n+2}\lbrack{n+3}\rbrack_{q}\,i\,j) \\
&\pm(\alpha^{m-1}\lbrack{m}\rbrack_{q}+\alpha^{m}\lbrack{m+1}\rbrack_{q}\,i+\alpha^{m+1}\lbrack{m+2}\rbrack_{q}\,j\\
&+\alpha^{m+2}\lbrack{m+3}\rbrack_{q}\,i\,j) \\
=&[\alpha^{n}(\frac{1-q^n}{\alpha-\alpha\,q})\pm\alpha^{m}(\frac{1-q^m}{\alpha-\alpha\,q})] \\
&+[\alpha^{n+1}(\frac{1-q^{n+1}}{\alpha-\alpha\,q})\pm\alpha^{m+1}(\frac{1-q^{m+1}}{\alpha-\alpha\,q})]\,i \\
&+[\alpha^{n+2}(\frac{1-q^{n+2}}{\alpha-\alpha\,q})\pm\alpha^{m+2}(\frac{1-q^{m+2}}{\alpha-\alpha\,q})]\,j \\
&+[\alpha^{n+3}(\frac{1-q^{n+3}}{\alpha-\alpha\,q})\pm\alpha^{m+3}(\frac{1-q^{m+3}}{\alpha-\alpha\,q})]\,i\,j \\
=&\frac{1}{\alpha-\alpha\,q}\,\{\,(\alpha^{n}\pm\alpha^{m})(1+\alpha\,i+\alpha^2\,j+\alpha^3\,i\,j) \\
&+((\alpha\,q)^n\pm(\alpha\,q)^m\,)(1+(\alpha\,q)\,i+(\alpha\,q)^2\,j\\
&+(\alpha\,q)^3\,i\,j\,)\,\}.
\end{array}
\end{equation}
The multiplication of $q$-Fibonacci bicomplex quaternion by the real scalar $\lambda$ is defined as 
\begin{equation}\label{15}
\begin{array}{rl}
{\lambda}\,\mathbb{BF}_n(\alpha;q)&=\lambda\,\alpha^{n}(1+\alpha\,i+\alpha^2\,j+\alpha^3\,i\,j\,)\\
&+\lambda\,(\alpha\,q)^n\,(\,1+(\alpha\,q)\,i+(\alpha\,q)^2\,j+(\alpha\,q)^3\,i\,j\,).\\
\end{array}
\end{equation}
\\
The scalar  and the vector part of \, $\mathbb{BF}_n(\alpha;q)$ which is the $n-th$ term of the $q$-Fibonacci bicomplex quaternion are denoted by 
\begin{equation}\label{17}
{S}_{\mathbb{BF}_n(\alpha;q)}=\alpha^{n-1}\lbrack{n}\rbrack_{q},\,\,
{V}_{\mathbb{BF}_n(\alpha;q)}=\alpha^{n}\lbrack{n+1}\rbrack_{q}\,i+\alpha^{n+1}\lbrack{n+2}\rbrack_{q}\,j+\alpha^{n+2}\lbrack{n+3}\rbrack_{q}\,i\,j.
\end{equation}
\\
Thus, the $q$-Fibonacci bicomplex quaternion $\mathbb{BF}_n(\alpha;q)$  is given by \\ $$\mathbb{BF}_n(\alpha;q)={S}_{\mathbb{BF}_n(\alpha;q)}+{V}_{\mathbb{BF}_n(\alpha;q)}$$.
\\ 
The multiplication of two $q$-Fibonacci bicomplex quaternions is defined by
\begin{equation}\label{16}
\begin{array}{rl}
\mathbb{BF}_n(\alpha;q)\times\,\mathbb{BF}_m(\alpha;q)=&(\alpha^{n-1}\lbrack{n}\rbrack_{q}+\alpha^{n}\lbrack{n+1}\rbrack_{q}\,i+\alpha^{n+1}\lbrack{n+2}\rbrack_{q}\,j\\
&+\alpha^{n+2}\lbrack{n+3}\rbrack_{q}\,i\,j)\\
&\times\,(\alpha^{m-1}\lbrack{m}\rbrack_{q}+\alpha^{m}\lbrack{m+1}\rbrack_{q}\,i+\alpha^{m+1}\lbrack{m+2}\rbrack_{q}\,j\\
&+\alpha^{m+2}\lbrack{m+3}\rbrack_{q}\,i\,j) \\
\\
=&\frac{1}{\alpha-\alpha\,q}\,(\alpha^{n+m})\,\{\,(1-\alpha^2-\alpha^4+\alpha^6)\\
&+2\,i\,(\alpha-\alpha^5)+2\,j\,(\alpha^2-\alpha^4)+4\,i\,j\,(\alpha^3)\}\\
&-q^m\,\{(1-\alpha(\alpha\,q)-\alpha^2(\alpha\,q)^2+\alpha^3(\alpha\,q)^3)\\
&+i\,(\alpha+(\alpha\,q)-\alpha^2(\alpha\,q)^3-\alpha^3(\alpha\,q)^2)\\
&+j\,(\alpha^2+(\alpha\,q)^2-\alpha(\alpha\,q)^3-\alpha^3(\alpha\,q))\\
&+i\,j\,(\alpha^3+(\alpha\,q)^3-\alpha(\alpha\,q)^2-\alpha^2(\alpha\,q))\}\\
&-q^n\,\{(1-\alpha(\alpha\,q)-\alpha^2(\alpha\,q)^2+\alpha^3(\alpha\,q)^3)\\
&+i\,(\alpha+(\alpha\,q)-\alpha^2(\alpha\,q)^3-\alpha^3(\alpha\,q)^2)\\
&+j\,(\alpha^2+(\alpha\,q)^2-\alpha(\alpha\,q)^3-\alpha^3(\alpha\,q))\\
&+i\,j\,(\alpha^3+(\alpha\,q)^3+\alpha^2(\alpha\,q)+\alpha(\alpha\,q)^2)\}\\
&+q^{n+m}\,\{(1-(\alpha\,q)^2-(\alpha\,q)^4+(\alpha\,q)^6)\\
&+2\,i\,((\alpha\,q)-(\alpha\,q)^5)+2\,j\,((\alpha\,q)^2-(\alpha\,q)^4)\\
&+4\,i\,j\,((\alpha\,q)^3\,)\}\\
=&\mathbb{BF}_m(\alpha;q)\times\,\mathbb{BF}_n(\alpha;q)
\end{array}
\end{equation} 
Here, quaternion multiplication is done using bicomplex quaternionic units (table 1), and this product is commutative.
\\
\begin{table}[]
\centering
\caption{Multiplication scheme of bicomplex units}
\begin{tabular}{c rrrr}
\hline
 $x$&  $1$&  $i$& $j$& $i\,j$\\  
\hline
 $1$&  $1$&  $i$&  $j$& $i\,j$\\  
 $i$&  $i$&  $-1$&  $i\,j$& $-j$\\ 
$j$&  $j$&  $i\,j$&  $-1$& $-i$\\
$i\,j$& $i\,j$& $-j$& $-i$& $1$\\ 
\hline 
\end{tabular}
\end{table}
\\
Also, the $q$-Fibonacci bicomplex quaternion product may be obtained as follows \\
\begin{equation}\label{18}
\begin{array}{lr}
\mathbb{BF}_n(\alpha;q)\times\,\mathbb{BF}_m(\alpha;q)=\\
\\
\scriptsize{
\begin{matrix}
&\left(\begin{array}{cccc}  
\alpha^{n-1}\lbrack{n}\rbrack_{q} & -\alpha^{n}\lbrack{n+1}\rbrack_{q} & -\alpha^{n+1}\lbrack{n+2}\rbrack_{q} & \alpha^{n+2}\lbrack{n+3}\rbrack_{q}  \\ 
\alpha^{n}\lbrack{n+1}\rbrack_{q} & -\alpha^{n-1}\lbrack{n}\rbrack_{q} & -\alpha^{n+2}\lbrack{n+3}\rbrack_{q} & -\alpha^{n+1}\lbrack{n+2}\rbrack_{q}  \\
\alpha^{n+1}\lbrack{n+2}\rbrack_{q} & -\alpha^{n+2}\lbrack{n+3}\rbrack_{q} & \alpha^{n-1}\lbrack{n}\rbrack_{q} & -\alpha^{n}\lbrack{n+1}\rbrack_{q}  \\ 
\alpha^{n+2}\lbrack{n+3}\rbrack_{q} & \alpha^{n+1}\lbrack{n+2}\rbrack_{q} & \alpha^{n}\lbrack{n+1}\rbrack_{q} & \alpha^{n-1}\lbrack{n}\rbrack_{q}  
\end{array} \right)
\end{matrix}}
.
\scriptsize{
\begin{matrix} 
\left(\begin{array}{c}
\alpha^{m-1}\lbrack{m}\rbrack_{q} \\ 
\alpha^{m}\lbrack{m+1}\rbrack_{q}  \\
\alpha^{m+1}\lbrack{m+2}\rbrack_{q} \\ 
\alpha^{m+2}\lbrack{m+3}\rbrack_{q}  
\end{array} \right)
\end{matrix}}
\end{array}
\end{equation}

\medskip

Three kinds of conjugation can be defined for bicomplex numbers \cite{rochon2004algebraic, nurkan2015note}. Therefore, conjugation of the $q$-Fibonacci bicomplex quaternion is defined in three different ways as follows
\begin{equation} \label{19}
\begin{aligned}
(\mathbb{BF}_n(\alpha;q))^{*_1}=&(\alpha^{n-1}\lbrack{n}\rbrack_{q}-i\,\alpha^{n}\lbrack{n+1}\rbrack_{q}+j\,\alpha^{n+1}\lbrack{n+2}\rbrack_{q}-i\,j\,\alpha^{n+2}\lbrack{n+3}\rbrack_{q}), \\
\end{aligned}
\end{equation}
\begin{equation} \label{20}
\begin{aligned}
(\mathbb{BF}_n(\alpha;q))^{*_2}=&(\alpha^{n-1}\lbrack{n}\rbrack_{q}+i\,\alpha^{n}\lbrack{n+1}\rbrack_{q}-j\,\alpha^{n+1}\lbrack{n+2}\rbrack_{q}-i\,j\,\alpha^{n+2}\lbrack{n+3}\rbrack_{q}), \\ 
\end{aligned}
\end{equation}
\begin{equation} \label{21}
\begin{aligned}
(\mathbb{BF}_n(\alpha;q))^{*_3}=&(\alpha^{n-1}\lbrack{n}\rbrack_{q}-i\,\alpha^{n}\lbrack{n+1}\rbrack_{q}-j\,\alpha^{n+1}\lbrack{n+2}\rbrack_{q}+i\,j\,\alpha^{n+2}\lbrack{n+3}\rbrack_{q}). 
\end{aligned}
\end{equation}
\\
Therefore, the norm of the $q$-Fibonacci bicomplex quaternion ${\,\mathbb{BF}_n(\alpha;q)}$ is defined in three different ways as follows
\begin{equation}\label{22}
\begin{array}{rl}
{N}_(\mathbb{BF}_n(\alpha;q))^{*_1}=&\|(\mathbb{BF}_n(\alpha;q))\times\,(\mathbb{BF}_n(\alpha;q))^{*_1}\|^2, 
\end{array} 
\end{equation}
\begin{equation}\label{23}
\begin{array}{rl}
{N}_(\mathbb{BF}_n(\alpha;q))^{*_2}=&\|(\mathbb{BF}_n(\alpha;q))\times\,(\mathbb{BF}_n(\alpha;q))^{*_2}\|^2, 
\end{array} 
\end{equation}
\begin{equation}\label{24}
\begin{array}{rl}
{N}_(\mathbb{BF}_n(\alpha;q))^{*_3}=&\|(\mathbb{BF}_n(\alpha;q))\times\,(\mathbb{BF}_n(\alpha;q))^{*_3}\|^2. 
\end{array}  
\end{equation}
\\
\begin{thm} \textbf{(Binet's Formula)}. Let ${\mathbb{BF}_n(\alpha;q)}$ and ${\mathbb{BL}_n(\alpha;q)}$be the $q$-Fibonacci bicomplex quaternion and the $q$-Lucas bicomplex quaternion. For $n\ge 1$, Binet's formula for these quaternions respectively, is as follows:
\begin{equation}\label{25}
\mathbb{BF}_n(\alpha;q)=\frac{\alpha^n\,\widehat{\gamma}-(\alpha\,q)^n\,\widehat{\delta}}{\alpha-\alpha\,q},
\end{equation}
and
\begin{equation}\label{26}
\mathbb{BL}_n(\alpha;q)={\alpha^n\,\widehat{\gamma}+(\alpha\,q)^n\,\widehat{\delta}}
\end{equation}
where
\begin{equation*}
\begin{array}{l}
\widehat{\gamma }=1+{\alpha}\,i+{\alpha}^2\,j+{\alpha}^3\,i\,j,\,\,\,\,\, \alpha=\frac{1+\sqrt{5}}{2}
\end{array}
\end{equation*}
and
\begin{equation*}
\begin{array}{l}
\widehat{\delta }=1+(\alpha\,q)\,i+(\alpha\,q)^2\,j+(\alpha\,q)^3\,i\,j,\,\,\,\,\, \alpha\,q=\frac{-1}{\alpha}.
\end{array}
\end{equation*}
\end{thm}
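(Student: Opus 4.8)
The plan is to prove both identities by a direct computation from the defining expressions \eqref{12} and \eqref{13}, using only the closed form of the $q$-integer, $\lbrack m\rbrack_q=(1-q^m)/(1-q)$. No recursion or induction is required: the definitions already present every component as a $q$-integer, so the proof is purely the algebraic bookkeeping of collecting the four bicomplex components along the basis $\{1,i,j,i\,j\}$.

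First I would dispose of the Fibonacci identity \eqref{25}. Substituting the closed form into the $k$-th coefficient $\alpha^{\,n-1+k}\lbrack n+k\rbrack_q$ (for $k=0,1,2,3$) and rebalancing one factor of $\alpha$ gives
\[
\alpha^{\,n-1+k}\,\frac{1-q^{\,n+k}}{1-q}=\frac{\alpha^{\,n+k}-(\alpha q)^{\,n+k}}{\alpha-\alpha q},
\]
where I have used $\alpha^{\,n+k}q^{\,n+k}=(\alpha q)^{\,n+k}$. Summing over the basis and then regrouping the $\alpha^{\,n+k}$ terms separately from the $(\alpha q)^{\,n+k}$ terms splits the sum into $\alpha^n(1+\alpha i+\alpha^2 j+\alpha^3 i\,j)=\alpha^n\widehat\gamma$ and $(\alpha q)^n(1+(\alpha q)i+(\alpha q)^2 j+(\alpha q)^3 i\,j)=(\alpha q)^n\widehat\delta$, which is exactly \eqref{25}. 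This is nothing but the chain of equalities already displayed in \eqref{12}, now read with the abbreviations $\widehat\gamma$ and $\widehat\delta$.

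For the Lucas identity \eqref{26} the one new ingredient is the ratio identity
\[
\frac{\lbrack 2m\rbrack_q}{\lbrack m\rbrack_q}=\frac{1-q^{2m}}{1-q^{m}}=1+q^{m},
\]
obtained by cancelling the common factor $1-q^m$. Applying it with $m=n+k$ collapses the $k$-th coefficient of \eqref{13} to $\alpha^{\,n+k}(1+q^{\,n+k})=\alpha^{\,n+k}+(\alpha q)^{\,n+k}$; observe that the sign produced here is a plus. The same regrouping as before then yields $\alpha^n\widehat\gamma+(\alpha q)^n\widehat\delta$, which is \eqref{26}.

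I expect no genuine obstacle. The only steps that need care are the two simplifications---the factor-of-$\alpha$ rebalancing that turns $\alpha^{\,n-1+k}(1-q^{\,n+k})/(1-q)$ into a difference of $(n+k)$-th powers, and the cancellation of $1-q^m$ in the Lucas ratio---together with keeping the signs and the bicomplex basis consistent across all four components. The specialization $\alpha=\tfrac{1+\sqrt5}{2}$, $\alpha q=-1/\alpha$ quoted in the statement serves only to pin down $\widehat\gamma$ and $\widehat\delta$ numerically and plays no role in the algebra.
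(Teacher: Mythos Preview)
Your proposal is correct and follows essentially the same direct-computation approach as the paper: substitute the closed form of $\lbrack m\rbrack_q$ into the definitions \eqref{12} and \eqref{13} and regroup the four components along $\{1,i,j,i\,j\}$ into the $\widehat\gamma$- and $\widehat\delta$-parts. Your treatment of the Lucas case via the factorization $\lbrack 2m\rbrack_q/\lbrack m\rbrack_q=1+q^m$ is in fact a bit more transparent than the paper's intermediate form and correctly produces the plus sign stated in \eqref{26}.
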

\begin{proof}
(\ref{25}): Using (\ref{8}) and  (\ref{12}), we find that 
\begin{equation*}
\begin{array}{rl}
\mathbb{BF}_n(\alpha;q)=&\alpha^{n-1}\lbrack{n}\rbrack_{q}+\alpha^{n}\lbrack{n+1}\rbrack_{q}\,i+\alpha^{n+1}\lbrack{n+2}\rbrack_{q}\,j+\alpha^{n+2}\lbrack{n+3}\rbrack_{q}\,i\,j\, \\
\\
=&\alpha^n\,\frac{1-q^n}{\alpha -\alpha\,q}+\alpha^{n+1}\,\frac{1-q^{n+1}}{\alpha -\alpha\,q}\,i+\alpha^{n+2}\,\frac{1-q^{n+2}}{\alpha -\alpha\,q}\,j+\alpha^{n+3}\,\frac{1-q^{n+3}}{\alpha -\alpha\,q}\,i\,j \\
\\
=&\frac{\alpha^{n}\,[\,1+\alpha\,i+\alpha^2\,j+\alpha^3\,i\,j\,]-(\alpha\,q)^{n}\,[\,1+(\alpha\,q)\,i+(\alpha\,q)^2\,j+(\alpha\,q)^3\,i\,j\,]}{\alpha -(\alpha\,q)} \\
\\
=&\frac{\alpha^n\,\widehat{\gamma}-(\alpha\,q)^n\,\widehat{\delta}}{\alpha-\alpha\,q}
\end{array}
\end{equation*}
\\
In a similar way, equality (\ref{26}) can be derived as follows
\begin{equation*}
\begin{array}{rl}
\mathbb{BL}_n(\alpha;q) =&\alpha^{n}\frac{{\lbrack{2\,n}\rbrack}_{q}}{{\lbrack{n}\rbrack}_{q}}+\alpha^{n+1}\,\frac{{\lbrack{2\,n+2}\rbrack}_{q}}{{\lbrack{n+1}\rbrack}_{q}}\,i+\alpha^{n+2}\,\frac{{\lbrack{2\,n+4}\rbrack}_{q}}{{\lbrack{n+2}\rbrack}_{q}}\,j
+\alpha^{n+3}\frac{{\lbrack{2\,n+6}\rbrack}_{q}}{{\lbrack{n+3}\rbrack}_{q}}\,i\,j\\
=&\alpha^{2n}\,(\frac{1-q^{2n}}{\alpha^n-(\alpha\,q)^n})+\alpha^{2n+2}\,(\frac{1-q^{2n+2}}{\alpha^{n+1}-(\alpha\,q)^{n+1}})\,i \\
\\
&+\alpha^{2n+4}\,(\frac{1-q^{2n+4}}{\alpha^{n+2}-(\alpha\,q)^{n+2}})\,j+\alpha^{2n+6}\,(\frac{1-q^{2n+6}}{\alpha^{n+3}-(\alpha\,q)^{n+3}})\,i\,j \\ 
=&\frac{\alpha^{2n}}{\alpha^n}\,(1+\alpha\,i+\alpha^2\,j+\alpha^3\,i\,j\,)\\
\\
&-\frac{(\alpha\,q)^{2n}}{(\alpha\,q)^n}\,(1+(\alpha\,q)\,i+(\alpha\,q)^2\,j+(\alpha\,q)^3\,i\,j\,)\\
\\
=&\alpha^n\,\widehat{\gamma}-(\alpha\,q)^n\,\widehat{\delta}.
\end{array}
\end{equation*}
where \, $\widehat{\gamma }=1+{\alpha}\,i+{\alpha}^2\,j+{\alpha}^3\,i\,j$, \, \, $\widehat{\delta }=1+(\alpha\,q)\,i+(\alpha\,q)^2\,j+(\alpha\,q)^3\,i\,j$  and   $\widehat{\gamma }\,\widehat{\delta}=\widehat{\delta}\,\widehat{\gamma}$.\\
\end{proof}

\medskip

\begin{thm} \textbf{(Exponential generating function)} \\
Let $\mathbb{BF}_n(\alpha;q)$ be the $q$-Fibonacci bicomplex quaternion. For the exponential generating function for these quaternions is as follows:
\begin{equation}\label{27}
\begin{aligned}
g_{\mathbb{BF}_n(\alpha;q)}\,(\frac{t^n}{n!})=&\sum\limits_{n=0}^{\infty}\,{\mathbb{BF}_n(\alpha;q)}\,\frac{t^n}{n!}=\frac{\widehat{\gamma}\,e^{\alpha\,t}\,-\widehat{\delta}\,e^{(\alpha\,q)\,t}}{\alpha-\alpha\,q}\,
\end{aligned}
\end{equation}
\end{thm}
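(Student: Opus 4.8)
The plan is to substitute the Binet-like formula (\ref{25}) into the defining sum and collapse everything to the Maclaurin series of the exponential function. First I would write the generating function as
\[
\sum_{n=0}^{\infty}\mathbb{BF}_n(\alpha;q)\,\frac{t^n}{n!}=\sum_{n=0}^{\infty}\frac{\alpha^n\,\widehat{\gamma}-(\alpha\,q)^n\,\widehat{\delta}}{\alpha-\alpha\,q}\,\frac{t^n}{n!}.
\]
This substitution is legitimate for every index in the sum because the Binet formula (\ref{25}) in fact remains valid at $n=0$: using (\ref{12}) together with $\lbrack{0}\rbrack_{q}=0$ one checks directly that $\mathbb{BF}_0(\alpha;q)=(\widehat{\gamma}-\widehat{\delta})/(\alpha-\alpha\,q)$, so the series may honestly start at $n=0$.

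Next I would pull the constant bicomplex coefficients $\widehat{\gamma}$ and $\widehat{\delta}$, together with the scalar $1/(\alpha-\alpha\,q)$, outside the summation and split the single series into two, obtaining
\[
\frac{1}{\alpha-\alpha\,q}\left(\widehat{\gamma}\sum_{n=0}^{\infty}\frac{(\alpha\,t)^n}{n!}-\widehat{\delta}\sum_{n=0}^{\infty}\frac{\bigl((\alpha\,q)\,t\bigr)^n}{n!}\right).
\]
Since $\widehat{\gamma}$ and $\widehat{\delta}$ are independent of $n$ and commute with the scalars appearing under the sum, this rearrangement is valid term by term in the bicomplex module. I would then recognize each remaining series as an exponential series, namely $\sum_{n=0}^{\infty}(\alpha\,t)^n/n!=e^{\alpha\,t}$ and $\sum_{n=0}^{\infty}\bigl((\alpha\,q)\,t\bigr)^n/n!=e^{(\alpha\,q)\,t}$, both convergent for all $t$, which yields
\[
\sum_{n=0}^{\infty}\mathbb{BF}_n(\alpha;q)\,\frac{t^n}{n!}=\frac{\widehat{\gamma}\,e^{\alpha\,t}-\widehat{\delta}\,e^{(\alpha\,q)\,t}}{\alpha-\alpha\,q},
\]
which is exactly (\ref{27}).

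The computation is essentially routine, so there is no deep obstacle. The only points that genuinely require care are the two bookkeeping steps above: verifying that the $n=0$ term is correctly reproduced by the Binet formula (so that the sum legitimately begins at $n=0$), and justifying the interchange of the summation with the splitting into the two exponential series. Both are handled by the absolute convergence of $e^{\alpha t}$ and $e^{(\alpha q)t}$, which guarantees that the componentwise rearrangement of the bicomplex-valued series is valid for every $t$.
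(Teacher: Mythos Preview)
Your proposal is correct and follows essentially the same route as the paper's own proof: substitute the Binet formula (\ref{25}) into the defining series, split into two sums, and identify each as an exponential series. You actually supply a bit more care than the paper does, in that you explicitly verify the $n=0$ term and remark on the convergence justifying the rearrangement.
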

\begin{proof}
Using the definition of exponential generating function, we obtain
\begin{equation}\label{28}
\begin{array}{rl}
\sum\limits_{n=0}^{\infty}\,{\mathbb{BF}_n(\alpha;q)}\,\frac{t^{n}}{n!}&=\sum\limits_{n=0}^{\infty}\,(\frac{\alpha^n\,\widehat{\gamma}-(\alpha\,q)^n\,\widehat{\delta}}{\alpha-\alpha\,q})\,\frac{t^n}{n!}\\
&=\frac{\widehat{\gamma}}{\alpha-\alpha\,q}\,\sum\limits_{n=0}^{\infty}\,\frac{t^{n}}{n!}-\frac{\widehat{\delta}}{\alpha-\alpha\,q}\,\sum\limits_{n=0}^{\infty}\,\frac{(\alpha\,q\,t)^{n}}{n!}\\
&=\frac{\widehat{\gamma}\,e^{\alpha\,t}\,-\widehat{\delta}\,e^{(\alpha\,q)\,t}}{\alpha-\alpha\,q}.
\end{array}
\end{equation}
Thus, the proof is completed.
\end{proof}

\medskip
 
\begin{thm} \textbf{(Honsberger identity)} \\ 
For \,$n,m\ge 0$ the Honsberger identity for  the $q$-Fibonacci bicomplex quaternions ${\mathbb{BF}_n(\alpha;q)}$ and ${\mathbb{BF}_m(\alpha;q)}$ \, is given by
\begin{equation}\label{29}
\begin{array}{lr}
\mathbb{BF}_n(\alpha;q)\,\mathbb{BF}_m(\alpha;q)+\mathbb{BF}_{n+1}(\alpha;q)\,\mathbb{BF}_{m+1}(\alpha;q)\\
\\
=\frac{\alpha^{n+m}}{\alpha-\alpha\,q}\,\{\,(1+\alpha^2)\,\widehat{\gamma}-\widehat{\gamma}\,\delta\,(1+\alpha(\alpha\,q))\,(q^n+q^m)+(1+(\alpha\,q)^2\,q^{n+m}\,\widehat{\delta^2}\,\}.
\end{array}
\end{equation}
\end{thm}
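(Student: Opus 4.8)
The plan is to derive (\ref{29}) directly from Binet's formula (\ref{25}), which writes each factor as a combination of $\alpha^{k}\widehat{\gamma}$ and $(\alpha q)^{k}\widehat{\delta}$ over the denominator $\alpha-\alpha q$. First I would substitute (\ref{25}) into all four factors, obtaining
\[
\mathbb{BF}_n(\alpha;q)\,\mathbb{BF}_m(\alpha;q)=\frac{(\alpha^{n}\widehat{\gamma}-(\alpha q)^{n}\widehat{\delta})(\alpha^{m}\widehat{\gamma}-(\alpha q)^{m}\widehat{\delta})}{(\alpha-\alpha q)^{2}}
\]
together with the analogous expression for $\mathbb{BF}_{n+1}(\alpha;q)\,\mathbb{BF}_{m+1}(\alpha;q)$ obtained by replacing $n,m$ with $n+1,m+1$. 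Expanding each numerator produces four monomials in $\widehat{\gamma}$ and $\widehat{\delta}$, namely $\widehat{\gamma}^{2}$, $\widehat{\gamma}\widehat{\delta}$, $\widehat{\delta}\widehat{\gamma}$ and $\widehat{\delta}^{2}$.

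The decisive step is to use the commutativity $\widehat{\gamma}\widehat{\delta}=\widehat{\delta}\widehat{\gamma}$, which holds because the bicomplex units commute by (\ref{6}) and was already recorded in the proof of Binet's formula. This merges the two mixed monomials in each product into a single multiple of $\widehat{\gamma}\widehat{\delta}$, so that after adding the two products only the three terms $\widehat{\gamma}^{2}$, $\widehat{\gamma}\widehat{\delta}$, $\widehat{\delta}^{2}$ survive. I would then collect their coefficients: the coefficient of $\widehat{\gamma}^{2}$ is $\alpha^{n+m}+\alpha^{n+m+2}=\alpha^{n+m}(1+\alpha^{2})$, the coefficient of $\widehat{\delta}^{2}$ is $(\alpha q)^{n+m}(1+(\alpha q)^{2})$, and the coefficient of $\widehat{\gamma}\widehat{\delta}$ is $-(1+\alpha(\alpha q))\big[\alpha^{n}(\alpha q)^{m}+(\alpha q)^{n}\alpha^{m}\big]$, the factor $1+\alpha(\alpha q)$ arising precisely from combining the unshifted and shifted contributions.

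The final step is to pull out a common factor $\alpha^{n+m}/(\alpha-\alpha q)^{2}$. Using $(\alpha q)^{k}=\alpha^{k}q^{k}$ one has $\alpha^{n}(\alpha q)^{m}+(\alpha q)^{n}\alpha^{m}=\alpha^{n+m}(q^{m}+q^{n})$ and $(\alpha q)^{n+m}=\alpha^{n+m}q^{n+m}$, so every coefficient carries the factor $\alpha^{n+m}$; this yields the right-hand side of (\ref{29}), with the bracket equal to $(1+\alpha^{2})\widehat{\gamma}^{2}-(1+\alpha(\alpha q))(q^{n}+q^{m})\widehat{\gamma}\widehat{\delta}+(1+(\alpha q)^{2})q^{n+m}\widehat{\delta}^{2}$. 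I expect no genuine obstacle here: the argument is a routine expansion once commutativity is invoked. The only place demanding care is the exponent bookkeeping when extracting $\alpha^{n+m}$ from the shifted $(n+1,m+1)$ product, since the extra factors $\alpha$ and $\alpha q$ coming from the index shift are exactly what generate the $1+\alpha^{2}$, $1+\alpha(\alpha q)$ and $1+(\alpha q)^{2}$ factors in the three coefficients.
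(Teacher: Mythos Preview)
Your proposal is correct and follows essentially the same route as the paper: substitute Binet's formula (\ref{25}) into each factor, expand, invoke the commutativity $\widehat{\gamma}\widehat{\delta}=\widehat{\delta}\widehat{\gamma}$, and collect the $\widehat{\gamma}^{2}$, $\widehat{\gamma}\widehat{\delta}$, $\widehat{\delta}^{2}$ coefficients after extracting $\alpha^{n+m}$. Your final expression, with denominator $(\alpha-\alpha q)^{2}$ and leading term $(1+\alpha^{2})\widehat{\gamma}^{2}$, in fact agrees with the last displayed line of the paper's own proof rather than with the theorem statement as printed, which carries evident typos (denominator $\alpha-\alpha q$ and $\widehat{\gamma}$ in place of $\widehat{\gamma}^{2}$).
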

\begin{proof}
(\ref{29}): By using (\ref{12}) and (\ref{25}) we get,
\begin{equation*}
\begin{array}{lr}
\mathbb{BF}_n(\alpha;q)\,\mathbb{BF}_m(\alpha;q)+\mathbb{BF}_{n+1}(\alpha;q)\,\mathbb{BF}_{m+1}(\alpha;q)\\
\\
=(\frac{\alpha^n\,\widehat{\gamma}-(\alpha\,q)^n\,\widehat{\delta}}{\alpha-\alpha\,q})\,(\frac{\alpha^m\,\widehat{\gamma}-(\alpha\,q)^m\,\widehat{\delta}}{\alpha-\alpha\,q})+(\frac{\alpha^{n+1}\,\widehat{\gamma}-(\alpha\,q)^{n+1}\,\widehat{\delta}}{\alpha-\alpha\,q})\,(\frac{\alpha^{m+1}\,\widehat{\gamma}-(\alpha\,q)^{m+1}\,\widehat{\delta}}{\alpha-\alpha\,q})\\
\\
=\frac{\alpha^{n+m}}{(\alpha-\alpha\,q)^2}\,\{\,(\widehat{\gamma}-q^n\,\widehat{\delta})(\widehat{\gamma}-q^m\,\widehat{\delta})\}+\frac{\alpha^{n+m+2}}{(\alpha-\alpha\,q)^2}\,\{\,\widehat{\gamma}-q^{n+1}\,\widehat{\delta})(\widehat{\gamma}-q^{m+1}\,\widehat{\delta})\}\\
\\
=\frac{\alpha^{n+m}}{(\alpha-\alpha\,q)^2}\,\{\,(1+\alpha^2)\,{\widehat{\gamma}}^2-\widehat{\gamma}\,\widehat{\delta}\,(1+\alpha(\alpha\,q))\,(q^n+q^m)+(1+(\alpha\,q)^2)\,q^{n+m}\,{\widehat{\delta}}^2\}. 
\end{array}
\end{equation*} 
where  $\widehat{\gamma }\,\widehat{\delta}=\widehat{\delta}\,\widehat{\gamma}$.
\end{proof}

\medskip

\begin{thm} \textbf{(d'Ocagne's identity)} \\ 
For $n,m\ge 0$ the d'Ocagne's identity for  the $q$-Fibonacci bicomplex quaternions $\mathbb{BF}_n(\alpha;q)$ and $\mathbb{BF}_m(\alpha;q)$ is given by
\begin{equation}\label{30}
\begin{array}{lr}
\mathbb{BF}_m(\alpha;q)\,\mathbb{BF}_{n+1}(\alpha;q)-\mathbb{BF}_{m+1}(\alpha;q)\,\mathbb{BF}_n(\alpha;q)=&\frac{\alpha^{n+m-1}(q^n-q^m)\,\widehat{\gamma}\,\widehat{\delta}}{(1-q)}. 
\end{array}
\end{equation}
\end{thm}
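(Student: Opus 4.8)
The plan is to substitute Binet's formula (\ref{25}) for each of the four factors and then expand, exploiting the commutativity relation $\widehat{\gamma}\,\widehat{\delta}=\widehat{\delta}\,\widehat{\gamma}$ recorded in the proof of Binet's formula. Writing $\mathbb{BF}_k(\alpha;q)=\dfrac{\alpha^k\,\widehat{\gamma}-(\alpha\,q)^k\,\widehat{\delta}}{\alpha-\alpha\,q}$, both products on the left-hand side share the common denominator $(\alpha-\alpha\,q)^2$, so it suffices to handle the numerators. First I would expand
\[
\bigl(\alpha^m\,\widehat{\gamma}-(\alpha\,q)^m\,\widehat{\delta}\bigr)\bigl(\alpha^{n+1}\,\widehat{\gamma}-(\alpha\,q)^{n+1}\,\widehat{\delta}\bigr)
\]
and
\[
\bigl(\alpha^{m+1}\,\widehat{\gamma}-(\alpha\,q)^{m+1}\,\widehat{\delta}\bigr)\bigl(\alpha^n\,\widehat{\gamma}-(\alpha\,q)^n\,\widehat{\delta}\bigr),
\]
each into four terms, using $\widehat{\gamma}\,\widehat{\delta}=\widehat{\delta}\,\widehat{\gamma}$ to write both mixed products as multiples of $\widehat{\gamma}\,\widehat{\delta}$.

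Next I would subtract the second expansion from the first. The $\widehat{\gamma}^2$ coefficient is $\alpha^{m+n+1}$ in both products, and the $\widehat{\delta}^2$ coefficient is $(\alpha\,q)^{m+n+1}$ in both, so these two terms cancel in the difference. What survives is a scalar multiple of $\widehat{\gamma}\,\widehat{\delta}$ with coefficient
\[
-\alpha^m(\alpha\,q)^{n+1}-(\alpha\,q)^m\alpha^{n+1}+\alpha^{m+1}(\alpha\,q)^n+(\alpha\,q)^{m+1}\alpha^n.
\]

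I would then simplify this coefficient by factoring out $\alpha^{m+n+1}$ and replacing each $(\alpha\,q)^{\ell}=\alpha^{\ell}q^{\ell}$, which collapses the bracket to $\alpha^{m+n+1}\bigl(q^n-q^{n+1}+q^{m+1}-q^m\bigr)=\alpha^{m+n+1}(1-q)(q^n-q^m)$. Dividing by the denominator $(\alpha-\alpha\,q)^2=\alpha^2(1-q)^2$ cancels two factors of $\alpha$ and one factor of $(1-q)$, leaving exactly $\dfrac{\alpha^{n+m-1}(q^n-q^m)\,\widehat{\gamma}\,\widehat{\delta}}{1-q}$, the claimed right-hand side.

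The computation is routine once this structure is in place; the only delicate point --- and the place where the analogous identity would fail in a genuinely noncommutative setting --- is the cancellation of the $\widehat{\gamma}^2$ and $\widehat{\delta}^2$ contributions together with the merging of the two mixed terms, all of which rely on $\widehat{\gamma}$ and $\widehat{\delta}$ commuting. I would therefore invoke $\widehat{\gamma}\,\widehat{\delta}=\widehat{\delta}\,\widehat{\gamma}$ explicitly at the expansion step rather than leave it tacit.
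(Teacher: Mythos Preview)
Your proposal is correct and follows essentially the same route as the paper: substitute Binet's formula (\ref{25}), expand over the common denominator $(\alpha-\alpha q)^2$, use $\widehat{\gamma}\,\widehat{\delta}=\widehat{\delta}\,\widehat{\gamma}$ to merge the cross terms, and simplify the surviving scalar factor. The paper's write-up simply condenses your four-term expansion into the single line $\frac{\alpha^{n+m+1}}{(\alpha-\alpha q)^2}(1-q)(q^n-q^m)\,\widehat{\gamma}\,\widehat{\delta}$ before dividing through, but the underlying computation is identical to yours.
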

\begin{proof}
(\ref{30}): By using (\ref{12}) and (\ref{25}) we get,
\begin{equation*}
\begin{array}{lr}
\mathbb{BF}_m(\alpha;q)\,\mathbb{BF}_{n+1}(\alpha;q)-\mathbb{BF}_{m+1}(\alpha;q)\,\mathbb{BF}_n(\alpha;q)\\
\\
=(\frac{\alpha^m\,\widehat{\gamma}-(\alpha\,q)^m\,\widehat{\delta}}{\alpha-\alpha\,q})\,(\frac{\alpha^{n+1}\,\widehat{\gamma}-(\alpha\,q)^{n+1}\,\widehat{\delta}}{\alpha-\alpha\,q})-(\frac{\alpha^{m+1}\,\widehat{\gamma}-(\alpha\,q)^{m+1}\,\widehat{\delta}}{\alpha-\alpha\,q})\,(\frac{\alpha^{n}\,\widehat{\gamma}-(\alpha\,q)^{n}\,\widehat{\delta}}{\alpha-\alpha\,q})\\
\\
=\frac{\alpha^{n+m+1}}{(\alpha-\alpha\,q)^2}\,\{(1-q)\,(q^n-q^m)\,\widehat{\gamma}\,\widehat{\delta}\,\}\\
\\
=\frac{\alpha^{n+m-1}(q^n-q^m)\,\widehat{\gamma}\,\widehat{\delta}}{(1-q)}.
\end{array}
\end{equation*}
Here, $\widehat{\gamma}\,\widehat{\delta}=\widehat{\delta}\,\widehat{\gamma}$ is used.
\end{proof}

\medskip

\begin{thm} \textbf{(Cassini Identity)} \\
Let $\mathbb{BF}_n(\alpha;q)$ be the $q$-Fibonacci bicomplex quaternion. For $n\ge 1$, Cassini's identity for $\mathbb{BF}_n(\alpha;q)$ is as follows: 
\begin{equation}\label{31}
\mathbb{BF}_{n+1}(\alpha;q)\,\mathbb{BF}_{n-1}(\alpha;q)-\mathbb{BF}_n(\alpha;q)^2=\frac{\alpha^{2n-2}\,q^n\,(1-q^{-1})\,\widehat{\gamma}\,\widehat{\delta}}{(1-q)} .  
\end{equation}
\end{thm}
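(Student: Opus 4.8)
The plan is to mirror the strategy already used for the Honsberger and d'Ocagne identities: substitute Binet's formula (\ref{25}) for each factor, expand over the common denominator $(\alpha-\alpha q)^2$, and use the commutativity $\widehat{\gamma}\,\widehat{\delta}=\widehat{\delta}\,\widehat{\gamma}$ recorded after Theorem~2.1. Writing $\mathbb{BF}_k=\frac{\alpha^k\widehat{\gamma}-(\alpha q)^k\widehat{\delta}}{\alpha-\alpha q}$ (suppressing the argument $(\alpha;q)$), I would first record the two expansions
\[
\mathbb{BF}_{n+1}\,\mathbb{BF}_{n-1}=\frac{\alpha^{2n}\widehat{\gamma}^2-\big(\alpha^{n+1}(\alpha q)^{n-1}+\alpha^{n-1}(\alpha q)^{n+1}\big)\widehat{\gamma}\widehat{\delta}+(\alpha q)^{2n}\widehat{\delta}^2}{(\alpha-\alpha q)^2},
\]
\[
\mathbb{BF}_n^{\,2}=\frac{\alpha^{2n}\widehat{\gamma}^2-2\alpha^{n}(\alpha q)^{n}\widehat{\gamma}\widehat{\delta}+(\alpha q)^{2n}\widehat{\delta}^2}{(\alpha-\alpha q)^2}.
\]

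Subtracting, the whole point of the computation is that the $\widehat{\gamma}^2$ and $\widehat{\delta}^2$ contributions are identical and cancel, leaving only the mixed term. Its coefficient simplifies via the key factorization
\[
2\alpha^{n}(\alpha q)^{n}-\alpha^{n+1}(\alpha q)^{n-1}-\alpha^{n-1}(\alpha q)^{n+1}=-\alpha^{n-1}(\alpha q)^{n-1}\big(\alpha-\alpha q\big)^2,
\]
where recognising the bracket as the negative perfect square $(\alpha-\alpha q)^2$ is the crux, since it cancels the denominator exactly and yields the single-term closed form
\[
\mathbb{BF}_{n+1}\,\mathbb{BF}_{n-1}-\mathbb{BF}_n^{\,2}=-\alpha^{n-1}(\alpha q)^{n-1}\,\widehat{\gamma}\widehat{\delta}=-\alpha^{2n-2}q^{n-1}\,\widehat{\gamma}\widehat{\delta}.
\]

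To match the form displayed in (\ref{31}) I would finally reconcile the scalar prefactor: since $q^n(1-q^{-1})=q^n-q^{n-1}=-q^{n-1}(1-q)$, one has $\tfrac{q^n(1-q^{-1})}{1-q}=-q^{n-1}$, so the expression above is precisely $\tfrac{\alpha^{2n-2}q^n(1-q^{-1})}{1-q}\,\widehat{\gamma}\widehat{\delta}$, as claimed. I do not anticipate a genuine obstacle: the argument is routine algebra in the commutative bicomplex algebra. The only points demanding care are the perfect-square factorization that drives the cancellation against $(\alpha-\alpha q)^2$, and the bookkeeping of keeping the cross-term as a single product $\widehat{\gamma}\widehat{\delta}$ (rather than splitting it into $\widehat{\gamma}\widehat{\delta}$ and $\widehat{\delta}\widehat{\gamma}$) so that commutativity may be invoked cleanly.
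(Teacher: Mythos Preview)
Your proposal is correct and follows essentially the same approach as the paper: substitute Binet's formula (\ref{25}) for each factor, expand over the denominator $(\alpha-\alpha q)^2$, use the commutativity $\widehat{\gamma}\,\widehat{\delta}=\widehat{\delta}\,\widehat{\gamma}$, and simplify the surviving cross term. Your write-up is in fact more explicit than the paper's, which jumps directly from the Binet substitution to the intermediate form $\frac{\alpha^{2n}q^n(1-q)(1-q^{-1})\,\widehat{\gamma}\widehat{\delta}}{(\alpha-\alpha q)^2}$ without displaying the perfect-square factorization you isolate.
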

\begin{proof}
(\ref{31}): By using (\ref{12}) and (\ref{25}) we get
\begin{equation*}
\begin{array}{rl}
\mathbb{BF}_{n+1}(\alpha;q)\,\mathbb{BF}_{n-1}(\alpha;q)-\mathbb{BF}_n(\alpha;q)^2=&(\frac{\alpha^{n+1}\,\widehat{\gamma}-(\alpha\,q)^{n+1}\,\widehat{\delta}}{\alpha-\alpha\,q})\,(\frac{\alpha^{n-1}\,\widehat{\gamma}-(\alpha\,q)^{n-1}\,\widehat{\delta}}{\alpha-\alpha\,q})\\
&-(\frac{\alpha^n\,\widehat{\gamma}-(\alpha\,q)^n\,\widehat{\delta}}{(\alpha-\alpha\,q)})^2 \\
=&\frac{\alpha^{2n}\,q^n\,(1-q)(1-q^{-1})\,\widehat{\gamma}\,\widehat{\delta}}{(\alpha-\alpha\,q)^2} \\
=&\frac{\alpha^{2n-2}\,q^n\,(1-q^{-1})\,\widehat{\gamma}\,\widehat{\delta}}{(1-q)}\, .  
\end{array}
\end{equation*} 
Here, $\widehat{\gamma}\,\widehat{\delta}=\widehat{\delta}\,\widehat{\gamma}$ is used.  
\end{proof}

\medskip

\begin{thm} \textbf{(Catalan's Identity)} \\
Let $\mathbb{BF}_n(\alpha;q)$ be the $q$-Fibonacci bicomplex quaternion. For $n\ge 1$, Catalan's identity for $\mathbb{BF}_n(\alpha;q)$ is as follows: 
\begin{equation}\label{32}
\mathbb{BF}_{n+r}(\alpha;q)\,\mathbb{BF}_{n-r}(\alpha;q)-\mathbb{BF}_n(\alpha;q)^2=\frac{\alpha^{2n-2}\,q^n\,(1-q^r)(1-q^{-r})\,\widehat{\gamma}\,\widehat{\delta}}{(1-q)^2}\, .
\end{equation}
\end{thm}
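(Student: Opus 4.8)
The plan is to imitate the structure of the Cassini proof (Theorem~2.5), which is exactly the case $r=1$ of the present identity, and reduce everything to the Binet formula (\ref{25}). First I would substitute
\begin{equation*}
\mathbb{BF}_k(\alpha;q)=\frac{\alpha^k\,\widehat{\gamma}-(\alpha\,q)^k\,\widehat{\delta}}{\alpha-\alpha\,q}
\end{equation*}
for $k=n+r,\,n-r,\,n$ into the left-hand side, so that both $\mathbb{BF}_{n+r}(\alpha;q)\,\mathbb{BF}_{n-r}(\alpha;q)$ and $\mathbb{BF}_n(\alpha;q)^2$ acquire the common prefactor $1/(\alpha-\alpha\,q)^2$. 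Expanding the first product gives four terms carrying $\widehat{\gamma}^2$, $\widehat{\gamma}\,\widehat{\delta}$, $\widehat{\delta}\,\widehat{\gamma}$, and $\widehat{\delta}^2$, and the square gives the analogous terms with a doubled cross coefficient.

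The key observation, just as in the Cassini and Catalan computations above, is that the pure $\widehat{\gamma}^2$ and $\widehat{\delta}^2$ contributions are identical in both products (each equals $\alpha^{2n}\widehat{\gamma}^2$ and $(\alpha\,q)^{2n}\widehat{\delta}^2$ respectively) and therefore cancel upon subtraction. Using the commutativity $\widehat{\gamma}\,\widehat{\delta}=\widehat{\delta}\,\widehat{\gamma}$ established in the proof of Theorem~2.1, the surviving cross terms combine into
\begin{equation*}
\mathbb{BF}_{n+r}(\alpha;q)\,\mathbb{BF}_{n-r}(\alpha;q)-\mathbb{BF}_n(\alpha;q)^2
=\frac{\widehat{\gamma}\,\widehat{\delta}}{(\alpha-\alpha\,q)^2}\,\bigl[\,2\,\alpha^n(\alpha\,q)^n-\alpha^{n+r}(\alpha\,q)^{n-r}-\alpha^{n-r}(\alpha\,q)^{n+r}\,\bigr].
\end{equation*}
Factoring $\alpha^{2n}q^n$ out of the bracket reduces it to the scalar $2-q^{-r}-q^{r}$.

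The one algebraic step to pin down is the factorization
\begin{equation*}
2-q^{r}-q^{-r}=(1-q^{r})(1-q^{-r}),
\end{equation*}
which is immediate on expanding the right-hand side, together with $(\alpha-\alpha\,q)^2=\alpha^2(1-q)^2$. Substituting both into the displayed expression yields
\begin{equation*}
\frac{\alpha^{2n}\,q^n\,(1-q^{r})(1-q^{-r})\,\widehat{\gamma}\,\widehat{\delta}}{\alpha^2(1-q)^2}
=\frac{\alpha^{2n-2}\,q^n\,(1-q^{r})(1-q^{-r})\,\widehat{\gamma}\,\widehat{\delta}}{(1-q)^2},
\end{equation*}
which is precisely (\ref{32}). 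I expect no real obstacle here: the computation is a direct generalization of the Cassini case, and the only point requiring care is the bookkeeping of the four expanded cross terms and confirming that the $\widehat{\gamma}^2$ and $\widehat{\delta}^2$ terms indeed cancel before the scalar simplification.
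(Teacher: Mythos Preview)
Your proposal is correct and follows essentially the same route as the paper's own proof: substitute the Binet formula (\ref{25}) for each of the three indices, observe that the $\widehat{\gamma}^2$ and $\widehat{\delta}^2$ contributions cancel, collect the surviving cross terms using $\widehat{\gamma}\,\widehat{\delta}=\widehat{\delta}\,\widehat{\gamma}$, and then simplify the scalar factor via $2-q^{r}-q^{-r}=(1-q^{r})(1-q^{-r})$ and $(\alpha-\alpha q)^2=\alpha^2(1-q)^2$. Your write-up is in fact slightly more explicit than the paper's about these last two algebraic reductions, but the argument is the same.
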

\begin{proof}
(\ref{32}): By using (\ref{12}) and (\ref{25}) we get 
\begin{equation*}
\begin{array}{rl}
\mathbb{BF}_{n+r}(\alpha;q)\,\mathbb{BF}_{n-r}(\alpha;q)-\mathbb{BF}_n(\alpha;q)^2=&(\frac{\alpha^{n+r}\,\widehat{\gamma}-(\alpha\,q)^{n+r}\,\widehat{\delta}}{\alpha-\alpha\,q})\,(\frac{\alpha^{n-r}\,\widehat{\gamma}-(\alpha\,q)^{n-r}\,\widehat{\delta}}{\alpha-\alpha\,q})\\
&-(\frac{\alpha^n\,\widehat{\gamma}-(\alpha\,q)^n\,\widehat{\delta}}{(\alpha-\alpha\,q)^2} \\
=&\frac{-\alpha^{2n}\,q^{n-r}\,\widehat{\gamma}\,\widehat{\delta}-\alpha^{2n}\,q^{n+r}\,\widehat{\gamma}\,\widehat{\delta}+2\,\alpha^{2n}\,q^{n}\,\widehat{\gamma}\,\widehat{\delta}}{(\alpha-\alpha\,q)^2} \\
=&-\frac{\alpha^{2n}\,q^{n}\,\widehat{\gamma}\,\widehat{\delta}\,[\,(q^{-r}-1)+(q^r-1)\,]}{(\alpha-\alpha\,q)^2 } \\
=&\frac{\alpha^{2n-2}\,q^{n}\,\widehat{\gamma}\,\widehat{\delta}\,(1-q^{-r})(1-q^r)}{(1-q)^2}
\end{array}
\end{equation*}
Here, $\widehat{\gamma}\,\widehat{\delta}=\widehat{\delta}\,\widehat{\gamma}$ is used.
\end{proof}

\section{Conclusion} 
In this paper, algebraic and analytic properties of the $q$-Fibonacci bicomplex quaternions are investigated.\\

\bibliographystyle{elsarticle-num} 



\bibliography{bibfile}

\end{document}